\let\oldmarginpar\marginpar
\renewcommand\marginpar[1]{\-\oldmarginpar[\raggedleft\footnotesize #1]%
{\raggedright\footnotesize #1}}
\begin{document}

\newtheorem{theorem}{Theorem}[section]
\newtheorem{corollary}[theorem]{Corollary}
\newtheorem{lemma}[theorem]{Lemma}
\newtheorem{proposition}[theorem]{Proposition}
\theoremstyle{definition}
\newtheorem{definition}[theorem]{Definition}
\theoremstyle{remark}
\newtheorem{remark}[theorem]{Remark}
\theoremstyle{definition}
\newtheorem{example}[theorem]{Example}

\def\rank{{\text{rank}\,}}

\numberwithin{equation}{section}

\title[Inequalities for the Casorati curvatures]{Inequalities for the Casorati curvatures of real hypersurfaces in some Grassmannians}

\author{Kwang-Soon Park}
\address{Division of General Mathematics, Room 4-107, Changgong Hall, University of Seoul, Seoul 02504, Republic of Korea}
\email{parkksn@gmail.com}

\keywords{real hypersurfaces, Grassmannians, scalar curvature, mean
curvature, $\delta$-Casorati curvature}

\subjclass[2000]{53C40, 53C15, 53C26, 53C42.}   

\begin{abstract}
In this paper we obtain two types of optimal inequalities consisting
of the normalized scalar curvature and the generalized normalized
$\delta$-Casorati curvatures for real hypersurfaces of complex
two-plane Grassmannians and complex hyperbolic two-plane
Grassmannians. We also find the conditions on which the equalities
hold.
\end{abstract}

\maketitle
\section{Introduction}\label{intro}
\addcontentsline{toc}{section}{Introduction}

In 1968, S. S. Chern \cite{C} gave an open question, which deals
with the existence of minimal immersions into any Euclidean spaces.

In 1993, to solve such problems, B. Y. Chen \cite{C1} introduced the
notion of Chen invariants (or $\delta$-invariants) and obtained some
optimal inequalities consisting of intrinsic invariants and some
extrinsic invariants for any Riemannian submanifolds. It was the
starting point of the theory of Chen invariants, which are one of
the most interesting topics in differential geometry (\cite{ACM},
\cite{C2}, \cite{CDVV}, \cite{OM}, \cite{V}).

The Casorati curvature of a submanifold in a Riemannian manifold is
the extrinsic invariant given by the normalized square of the second
fundamental form and some optimal inequalities containing Casorati
curvatures were obtained  for submanifolds of real space forms,
complex space forms, and quaternionic space forms (\cite{DHV},
\cite{G}, \cite{SSV}, \cite{LV}). The notion of Casorati curvature
is the extended version of the notion of the principal curvatures of
a hypersurface of a Riemannian manifold. So, it is both important
and very interesting to obtain some optimal inequalities for the
Casorati curvatures of submanifolds in any ambient Riemannian
manifolds.

For the real hypersurfaces of both complex space forms and
quaternionic space forms, it is well-known that there don't exist
any real hyersurfaces with paralell shape operator by the Codazzi
equation.

We also know the following.  A real hypersurface of a complex
projective space with a parallel second fundamental form is locally
congruent to a tube over some totally geodesic complex submanifold
with some radius \cite{KM}.

There don't exist any real Hopf hypersurface with parallel Ricci
tensor of a complex projective space \cite{K}.

A real hypersurface of a quaternionic projective space with the
shape operator to be parallel with respect to some almost contact
structure vector fields is locally congruent to a tube over some
quaternionic projective space with some radius \cite{P}.

Since such results had been introduced, many geometers studied real
hypersurfaces of a complex two-plane Grassmannian
$G_2(\mathbb{C}^{m+2})$.

Some natural two distributions of a real hypersurface of
$G_2(\mathbb{C}^{m+2})$ with $m\geq 3$ are invariant under the shape
operator if and only if either it is an open part of a tube around a
totally geodesic submanifold $G_2(\mathbb{C}^{m+1})$ of
$G_2(\mathbb{C}^{m+2})$ or it  is an open part of a tube around a
totally geodesic submanifold $\mathbb{H}P^n$ of
$G_2(\mathbb{C}^{m+2})$ \cite{BS0}.

There don't exist any real hypersurfaces of $G_2(\mathbb{C}^{m+1})$
with parallel second fundamental form \cite{S}.

As we know, both a complex two-plane Grassmannian
$G_2(\mathbb{C}^{m+2})$ and a complex hyperbolic two-plane
Grassmannian $SU_{2,m}/S(U_2\cdot U_m)$ are examples of Hermitian
symmetric spaces with rank 2. And studying a real hypersurface of
Hermitian symmetric spaces with rank 2 is very important and one of
the main topics in submanifold theory. And the classification of
real hypersurfaces of Hermitian symmetric spaces with rank 2 is one
of the important subjects in differential geometry.

Many geometers obtained some results on $SU_{2,m}/S(U_2\cdot U_m)$.

The maximal complex subbundle and the maximal quaternionic subbundle
of a real hypersurface of $SU_{2,m}/S(U_2\cdot U_m)$ are invariant
under the shape operator if and only if it is locally congruent to
an open part of some particlar type of hypersurfaces \cite{BS}.

There does not exist any real hypersurface in complex hyperbolic
two-plane Grassmannian $SU_{2,m}/S(U_2\cdot U_m)$, $m\geq 3$, with
commuting shape operator \cite{PSW}.

There does not exist any Hopf hypersurface in complex hyperbolic
two-plane Grassmannian $SU_{2,m}/S(U_2\cdot U_m)$, $m\geq 3$, with
commuting shape operator on the complex maximal subbundle
\cite{PSW}.

As the author knows, there are only examples for such optimal
inequalities at the submanifolds of constant space forms (i.e., real
space forms, complex space forms, and quaternionic space forms).
Therefore, the optimal inequalities, which are given here, are both
meaningful and very important.

\section{Preliminaries}\label{prelim}

In this section we remind some notions, which are used in the
following sections.

Given an almost Hermitian manifold $(N,g,J)$, i.e., $N$ is a
$C^{\infty}$-manifold, $g$ is a Riemannian metric on $N$, and $J$ is
a compatible almost complex structure on $(N,g)$ (i.e., $J\in
\text{End}(TN)$, $J^2=-id$, $g(JX,JY)=g(X,Y)$ for any vector fields
$X,Y\in \Gamma(TN)$), we call the manifold $(N,g,J)$ {\em
K\"{a}hler} if $\nabla J=0$, where $\nabla$ is the Levi-Civita
connection of $g$.

Let $N$ be a $4m-$dimensional $C^{\infty}$-manifold and let $E$ be a
rank 3 subbundle of $\text{End} (TN)$ such that for any point $p\in
N$ with a neighborhood $U$, there exists a local basis $\{
J_1,J_2,J_3 \}$ of sections of $E$ on $U$ satisfying for all
$\alpha\in \{ 1,2,3 \}$
$$
J_{\alpha}^2=-id, \quad
J_{\alpha}J_{\alpha+1}=-J_{\alpha+1}J_{\alpha}=J_{\alpha+2},
$$
where the indices are taken from $\{ 1,2,3 \}$ modulo 3.

Then we call $E$ an {\em almost quaternionic structure} on $N$ and
$(N,E)$ an {\em almost quaternionic manifold} \cite{AM}.

Moreover, let $g$ be a Riemannian metric on $N$ such that for any
point $p\in N$ with a neighborhood $U$, there exists a local basis
$\{ J_1,J_2,J_3 \}$ of sections of $E$ on $U$ satisfying for all
$\alpha\in \{ 1,2,3 \}$
\begin{equation}\label{hypercom}
J_{\alpha}^2=-id, \quad
J_{\alpha}J_{\alpha+1}=-J_{\alpha+1}J_{\alpha}=J_{\alpha+2},
\end{equation}
\begin{equation}\label{hypermet}
g(J_{\alpha}X, J_{\alpha}Y)=g(X, Y)
\end{equation}
for all vector fields  $X, Y\in \Gamma(TN)$, where the indices are
taken from $\{ 1,2,3 \}$ modulo 3.

Then we call $(N,E,g)$ an {\em almost quaternionic Hermitian
manifold} \cite{IMV}.

For convenience, the above basis $\{ J_1,J_2,J_3 \}$ satisfying
(\ref{hypercom}) and (\ref{hypermet}) is said to be a {\em
quaternionic Hermitian basis}.

Let $(N,E,g)$ be an almost quaternionic Hermitian manifold.

We call $(N,E,g)$ a {\em quaternionic K\"{a}hler manifold} if there
exist locally defined 1-forms $\omega_1, \omega_2, \omega_3$ such
that for $\alpha \in \{ 1,2,3 \}$
$$
\nabla_X J_{\alpha} =
\omega_{\alpha+2}(X)J_{\alpha+1}-\omega_{\alpha+1}(X)J_{\alpha+2}
$$
for any vector field $X\in \Gamma(TN)$, where the indices are taken
from $\{ 1,2,3 \}$ modulo 3 \cite{IMV}.

If there exists a global parallel quaternionic Hermitian basis $\{
J_1,J_2,J_3 \}$ of sections of $E$ on $N$ (i.e., $\nabla J_{\alpha}
= 0$ for $\alpha \in \{ 1,2,3 \}$, where $\nabla$ is the Levi-Civita
connection of the metric $g$), then $(N, E, g )$ is said to be a
{\em hyperk\"{a}hler manifold}. Furthermore, we call $(J_1, J_2,
J_3, g )$ a {\em hyperk\"{a}hler structure} on $N$ and $g$ a {\em
hyperk\"{a}hler metric} \cite{B}.

Let $G_2(\mathbb{C}^{m+2})$ be the set of all complex 2-dimensional
linear subspaces of $\mathbb{C}^{m+2}$. Then we know that the
complex two-plane Grassmannian $G_2(\mathbb{C}^{m+2})$ has some
Riemannian symmetric structure (\cite{B1}, \cite{S}). Denote by $g$
the corresponding metric. As we know, it is the unique compact
irreducible Riemannian manifold such that it has both a K\"{a}hler
structure $J$ and a quaternionic K\"{a}hler structure $E$ with
$J\notin E$. And $G_2(\mathbb{C}^{m+2})$ is the unique compact
irreducible K\"{a}hler quaternionic K\"{a}hler manifold such that it
is not a hyperk\"{a}hler manifold.

Given a local quaternionic Hermitian basis $\{J_1,J_2,J_3\}$ of $E$,
we have
\begin{equation}\label{comm}
J_i\circ J = J\circ J_i
\end{equation}
for $J_i\in \{J_1,J_2,J_3\}$ and the Riemannian curvature tensor
$\overline{R}$ of $(G_2(\mathbb{C}^{m+2}),g)$ is locally given by
\begin{eqnarray}
\quad \overline{R}(X,Y)Z   &=& g(Y,Z)X-g(X,Z)Y            \label{curv1} \\
&+& g(JY,Z)JX-g(JX,Z)JY-2g(JX,Y)JZ   \nonumber  \\
&+& \sum_{\alpha=1}^{3} \{ g(J_{\alpha}Y,Z)J_{\alpha}X-g(J_{\alpha}X,Z)J_{\alpha}Y-2g(J_{\alpha}X,Y)J_{\alpha}Z \}  \nonumber  \\
&+& \sum_{\alpha=1}^{3} \{
g(J_{\alpha}JY,Z)J_{\alpha}JX-g(J_{\alpha}JX,Z)J_{\alpha}JY \}
\nonumber
\end{eqnarray}
for any vector fields $X,Y,Z\in \Gamma(TG_2(\mathbb{C}^{m+2}))$
(\cite{B1}, \cite{S}).

Similarly, let $SU_{2,m}/S(U_2\cdot U_m)$ be the set of all complex
two-dimensional linear subspaces in indefinite complex Euclidean
space $\mathbb{C}_2^{m+2}$. Then the complex hyperbolic two-plane
Grassmannian $SU_{2,m}/S(U_2\cdot U_m)$ becomes a connected simply
connected irreducible Riemannian symmetric space with noncompact
type and rank two \cite{BS}. Denote by $g$ the corresponding metric.
And it is the unique noncompact irreducible manifold with negative
scalar curvature such that it has a K\"{a}hler structure $J$ and a
quaternionic K\"{a}hler structure $E$ with $J\notin E$ \cite{BS}.

We also know that given a local quaternionic Hermitian basis
$\{J_1,J_2,J_3\}$ of $E$, we have
\begin{equation}\label{comm2}
J_i\circ J = J\circ J_i
\end{equation}
for $J_i\in \{J_1,J_2,J_3\}$ and the Riemannian curvature tensor
$\overline{R}$ of $(SU_{2,m}/S(U_2\cdot U_m),g)$ is locally given by
\begin{eqnarray}
\quad \overline{R}(X,Y)Z  &=& -\frac{1}{2} [ g(Y,Z)X-g(X,Z)Y            \label{curv2} \\
&+& g(JY,Z)JX-g(JX,Z)JY-2g(JX,Y)JZ   \nonumber  \\
&+& \sum_{\alpha=1}^{3} \{ g(J_{\alpha}Y,Z)J_{\alpha}X-g(J_{\alpha}X,Z)J_{\alpha}Y-2g(J_{\alpha}X,Y)J_{\alpha}Z \}  \nonumber  \\
&+& \sum_{\alpha=1}^{3} \{
g(J_{\alpha}JY,Z)J_{\alpha}JX-g(J_{\alpha}JX,Z)J_{\alpha}JY \}
\nonumber ]
\end{eqnarray}
for any vector fields $X,Y,Z\in \Gamma(TSU_{2,m}/S(U_2\cdot U_m))$
\cite{BS}.

Furthermore, we remind some notions, which are used later. Let
$(N,g_N)$ be a Riemannian manifold and $M$ a submanifold of
$(N,g_N)$ with the induced metric $g_M$. Then the {\em Gauss} and
{\em Weingarten formula} are given by
\begin{equation}\label{eq3}
\overline{\nabla}_X Y = \nabla_X Y + h(X,Y)
\end{equation}
for $X,Y\in \Gamma(TM)$,
\begin{equation}\label{eq4}
\overline{\nabla}_X N = -A_N X + \nabla_X^{\perp} N
\end{equation}
for $X\in \Gamma(TM)$ and $N\in \Gamma(TM^{\perp})$, where
$\overline{\nabla}$ and $\nabla$ are the Levi-Civita connections of
the metrics $g_N$ and $g_M$, respectively, $h$ is the second
fundamental form of $M$ in $N$, $A$ is the shape operator of $M$ in
$N$, and $\nabla^{\perp}$ is the normal connection of $M$ in $N$.

We denote by $\overline{R}$ and $R$ the Riemannian curvature tensors
of $g_N$ and $g_M$, respectively. Then the {\em Gauss equation} is
given by
\begin{eqnarray}
R(X,Y,Z,W)  &=& \overline{R}(X,Y,Z,W)           \label{eq5} \\
&+& g_N(h(X,W), h(Y,Z)) - g_N(h(X,Z), h(Y,W))   \nonumber
\end{eqnarray}
for any vector fields $X,Y,Z,W\in \Gamma(TM)$, where
$\overline{R}(X,Y,Z,W):= g_N(\overline{R}(X,Y)Z, W)$ and
$R(X,Y,Z,W):= g_M(R(X,Y)Z, W)$.

Consider a local orthonormal tangent frame $\{ e_1, \cdots, e_m \}$
of the tangent bundle $TM$ of $M$ and a local orthonormal normal
frame $\{ e_{m+1}, \cdots, e_n \}$ of the normal bundle $TM^{\perp}$
of $M$ in $N$. The {\em scalar curvature} $\tau$ of $M$ is defined
by
\begin{equation}\label{eq6}
\tau = \sum_{1\leq i<j\leq m} K(e_i\wedge e_j),
\end{equation}
where $K(e_i\wedge e_j) := R(e_i,e_j,e_j,e_i)$ for $1\leq i<j\leq
m$. And the {\em normalized scalar curvature} $\rho$ of $M$ is given
by
\begin{equation}\label{eq7}
\rho = \frac{2\tau}{m(m-1)}.
\end{equation}
We denote by $H$ the {\em mean curvature vector field} of $M$ in
$N$. i.e., $H = \frac{1}{m}\sum_{i=1}^m h(e_i,e_i)$. Conveniently,
let $h_{ij}^{\alpha} := g_N(h(e_i,e_j), e_{\alpha})$ for $i,j\in \{
1, \cdots, m \}$ and $\alpha\in \{ m+1, \cdots, n \}$. Then we have
the {\em squared mean curvature} $||H||^2$ of $M$ in $N$ and the
{\em squared norm} $||h||^2$ of $h$ as follows:
\begin{equation}\label{eq8}
||H||^2 = \frac{1}{m^2}\sum_{\alpha =m+1}^n (\sum_{i=1}^m
h_{ii}^{\alpha})^2,
\end{equation}
\begin{equation}\label{eq9}
||h||^2 = \sum_{\alpha =m+1}^n \sum_{i,j=1}^m (h_{ij}^{\alpha})^2.
\end{equation}
The {\em Casorati curvature} $C$ of $M$ in $N$ is defined by
\begin{equation}\label{eq10}
C := \frac{1}{m} ||h||^2.
\end{equation}
The submanifold $M$ is said to be {\em invariantly quasi-umbilical}
if there exists a local orthonormal normal frame $\{ e_{m+1},
\cdots, e_n \}$ of $M$ in $N$ such that the shape operators
$A_{e_{\alpha}}$ have an eigenvalue of multiplicity $m-1$ for all
$\alpha\in \{ m+1, \cdots, n \}$ and the distinguished
eigendirection of $A_{e_{\alpha}}$ is the same for each $\alpha\in
\{ m+1, \cdots, n \}$ \cite{B2}.

Let $L$ be a $k$-dimensional subspace of $T_p M$, $k\geq 2$, for
$p\in M$ such that $\{ e_1, \cdots, e_k \}$ is an orthonormal basis
of $L$. Then the {\em scalar curvature} $\tau (L)$ of the $k$-plane
$L$ is given by
\begin{equation}\label{eq11}
\tau (L) := \sum_{1\leq i<j\leq k} K(e_i\wedge e_j)
\end{equation}
and the {\em Casorati curvature} $C(L)$ of the subspace $L$ is
defined by
\begin{equation}\label{eq12}
C(L) := \frac{1}{k} \sum_{\alpha =m+1}^n \sum_{i,j=1}^k
(h_{ij}^{\alpha})^2.
\end{equation}
The {\em normalized $\delta$-Casorati curvatures} $\delta_c (m-1)$
and $\widehat{\delta}_c (m-1)$ of $M$ in $N$ are given by
\begin{equation}\label{eq13}
[\delta_c (m-1)](p) := \frac{1}{2}C(p) + \frac{m+1}{2m} \inf \{
C(L)|L \ \text{a hyperplane of} \ T_p M \}
\end{equation}
\begin{equation}\label{eq14}
[\widehat{\delta}_c (m-1)](p) := 2C(p) - \frac{2m-1}{2m} \sup \{
C(L)|L \ \text{a hyperplane of} \ T_p M \}.
\end{equation}
We define the {\em generalized normalized $\delta$-Casorati
curvatures} $\delta_c (r,m-1)$ and $\widehat{\delta}_c (r,m-1)$ of
$M$ in $N$ as follows:
\begin{eqnarray}
&& [\delta_c (r,m-1)](p) := rC(p)           \label{eq15} \\
&& + \frac{(m-1)(m+r)(m^2-m-r)}{rm} \inf \{ C(L)|L \ \text{a
hyperplane of} \ T_p M \}   \nonumber
\end{eqnarray}
for $0<r<m^2-m$,
\begin{eqnarray}
&& [\widehat{\delta}_c (r,m-1)](p) := rC(p)           \label{eq16} \\
&& - \frac{(m-1)(m+r)(r-m^2+m)}{rm} \sup \{ C(L)|L \ \text{a
hyperplane of} \ T_p M \}   \nonumber
\end{eqnarray}
for $r>m^2-m$.

Notice that $[\delta_c (\frac{m(m-1)}{2},m-1)](p) = m(m-1)[\delta_c
(m-1)](p)$ and $[\widehat{\delta}_c (2m(m-1),m-1)](p) =
m(m-1)[\widehat{\delta}_c (m-1)](p)$ for $p\in M$ so that the
generalized normalized $\delta$-Casorati curvatures $\delta_c
(r,m-1)$ and $\widehat{\delta}_c (r,m-1)$ are the generalized
versions of the normalized $\delta$-Casorati curvatures $\delta_c
(m-1)$ and $\widehat{\delta}_c (m-1)$, respectively.

Throughout this paper, we will use the above notations.

\section{Some optimal inequalities}\label{semi}

In this section we will obtain some optimal inequalities consisting
of the normalized scalar curvature and the generalized normalized
$\delta$-Casorati curvatures for real hypersurfaces of complex
two-plane Grassmannians and complex hyperbolic two-plane
Grassmannians.

\begin{theorem}
Let $M$ be a real hypersurface of a complex two-plane Grassmannians
$G_2(\mathbb{C}^{m+2})$ with $n=4m-1$. Then we have

(a) The generalized normalized $\delta$-Casorati curvature $\delta_c
(r,n-1)$ satisfies
\begin{equation}\label{eq17}
\rho \leq \frac{\delta_c (r,n-1)}{n(n-1)} + \frac{n+9}{n}
\end{equation}
for any $r\in \mathbb{R}$ with $0<r<n(n-1)$.

(b) The generalized normalized $\delta$-Casorati curvature
$\widehat{\delta}_c (r,n-1)$ satisfies
\begin{equation}\label{eq18}
\rho \leq \frac{\widehat{\delta}_c (r,n-1)}{n(n-1)} + \frac{n+9}{n}
\end{equation}
for any $r\in \mathbb{R}$ with $r>n(n-1)$.

Moreover, the equalities hold in the relations (\ref{eq17}) and
(\ref{eq18}) if and only if $M$ is an invariantly quasi-umbilical
submanifold with flat normal connection in $G_2(\mathbb{C}^{m+2})$
such that  with some orthonormal tangent frame $\{ e_1, \cdots, e_n
\}$ of $TM$ and orthonormal normal frame $\{ e_{n+1}=e \}$ of
$TM^{\perp}$, the shape operator $A_e$ takes the following form
\begin{equation}\label{eq188}
A_e = \left(
        \begin{array}{ccccc}
          a & 0 & \cdots & 0 & 0 \\
          0 & a & \cdots & 0 & 0 \\
          \vdots & \vdots & \ddots & \vdots & \vdots \\
          0 & 0 & \cdots & a & 0 \\
          0 & 0 & \cdots & 0 & \frac{n(n-1)}{r}a \\
        \end{array}
      \right).
\end{equation}
\end{theorem}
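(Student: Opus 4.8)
The plan is to follow the standard optimization scheme of Chen–Casorati type inequalities, applied to a real hypersurface ($n = 4m-1$, codimension $1$) of $G_2(\mathbb{C}^{m+2})$, using the explicit curvature tensor $\overline{R}$ from (\ref{curv1}). First I would compute the ambient scalar-type sum. Fix an orthonormal frame $\{e_1,\dots,e_n\}$ of $T_pM$ and the unit normal $e = e_{n+1}$. Summing the Gauss equation (\ref{eq5}) over $1\le i<j\le n$ gives
\begin{equation*}
2\tau = \sum_{i,j} \overline{R}(e_i,e_j,e_j,e_i) + n^2\|H\|^2 - \|h\|^2 .
\end{equation*}
Using (\ref{curv1}), the term $\sum_{i\ne j}\overline{R}(e_i,e_j,e_j,e_i)$ evaluates to a constant multiple of $n(n-1)$ plus contributions from $J$ and the $J_\alpha$: the $J$-terms give (since $g(Je_i,e_j)^2$ summed is the length of $J$ restricted appropriately, but $J$ is tangent up to the normal component $g(Je_i,e)$) something like $3\big(n - \sum_i g(Je_i,e)^2\big)$, and similarly the three quaternionic structures $J_\alpha$ and the composites $J_\alpha J$ each contribute analogous bounded corrections. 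The key point is that all these trigonometric-type sums are bounded between fixed constants, and the worst case — which produces exactly the constant $\frac{n+9}{n}$ after normalization — occurs when $J$ and all $J_\alpha$ map the tangent space as nicely as possible; collecting these gives an estimate of the form $2\tau \le c\, n(n-1) + n^2\|H\|^2 - \|h\|^2$ with the relevant constant, equivalently $\rho \le \frac{n+9}{n} + \frac{n^2\|H\|^2 - \|h\|^2}{n(n-1)}$, with equality iff $Je$, $J_1 e$, $J_2 e$, $J_3 e$ lie in prescribed positions (the ``maximally tangent'' configuration).

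Next comes the purely algebraic optimization, which is where the coefficient $\frac{(n-1)(n+r)(n^2-n-r)}{rn}$ is designed to work. Introduce the quadratic polynomial
\begin{equation*}
\mathcal{P} := 2r\, C(p) + \frac{2(n-1)(n+r)(n^2-n-r)}{rn}\, C(L) - 2\tau(p) + c\,n(n-1),
\end{equation*}
for a hyperplane $L = \mathrm{span}\{e_1,\dots,e_{n-1}\}$; substituting the Gauss-equation estimate, $\mathcal{P}$ becomes (up to the sign-definite curvature corrections already absorbed) a quadratic form in the single family of components $h_{ij} := h_{ij}^{\,n+1}$. I would show $\mathcal{P}$ is a sum of squares: after grouping, it takes the shape
\begin{equation*}
\mathcal{P} \ \ge\ \frac{n+r}{n}\sum_{i=1}^{n-1}\Big(h_{ii} - \frac{rn}{(n+r)(n^2-n-r)}\,\textstyle\sum_{k} h_{kk}\Big)^2 \ +\ (\text{positive terms in } h_{ij},\ i\ne j) \ \ge\ 0,
\end{equation*}
with the critical-point analysis forcing $h_{ij}=0$ for $i\ne j$, $h_{nn} = \frac{n(n-1)}{r}\, h_{11}$, and $h_{11}=\dots=h_{n-1,n-1}=:a$. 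This yields $2\tau \le 2r\,C + \frac{2(n-1)(n+r)(n^2-n-r)}{rn}C(L) + c\,n(n-1)$ for every hyperplane $L$, hence for the infimum; dividing by $n(n-1)$ and recalling the definition (\ref{eq15}) of $\delta_c(r,n-1)$ gives (\ref{eq17}). Part (b) is verbatim the same computation with $\sup$ in place of $\inf$ and $r > n(n-1)$, so that the coefficient $-\frac{(n-1)(n+r)(r-n^2+n)}{rn}$ in (\ref{eq16}) is again what makes the quadratic form negative of a sum of squares in the right direction.

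Finally, for the equality case I would assemble the two sets of conditions: the Gauss-estimate equality pins down the ambient configuration, which (exactly as in the complex-space-form case) translates into the hypersurface being invariantly quasi-umbilical with flat normal connection — here trivial since the normal bundle is a line bundle, so flatness is automatic and the content is that $A_e$ is diagonalizable with the stated eigenvalue pattern; and the algebraic equality forces $A_e$ to be precisely the matrix in (\ref{eq188}). Conversely, plugging (\ref{eq188}) back into the Gauss equation and the definitions verifies that equality holds, which is a direct check. The main obstacle I anticipate is the first step: carefully evaluating $\sum_{i\ne j}\overline{R}(e_i,e_j,e_j,e_i)$ from (\ref{curv1}) and identifying the sharp constant $\frac{n+9}{n}$ (tracking the $3$ from $J$, the $3\times 3$ from the $J_\alpha$, and the composites $J_\alpha J$, together with the fact that $\dim G_2(\mathbb{C}^{m+2}) = 4m$ and $n = 4m-1$); the subsequent sum-of-squares manipulation, while lengthy, is routine once the target coefficients are known.
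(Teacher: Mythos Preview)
Your outline has the right architecture (Gauss equation $\Rightarrow$ scalar identity $\Rightarrow$ quadratic optimization in the $h_{ij}$), but there is a genuine gap in the first step which then propagates into your equality discussion. You treat the ambient sum $\sum_{i\ne j}\overline{R}(e_i,e_j,e_j,e_i)$ as an \emph{inequality}, writing $2\tau \le c\,n(n-1)+n^2\|H\|^2-\|h\|^2$ with equality only in a ``maximally tangent'' configuration of $Je,\,J_\alpha e$. In fact, for a real hypersurface of $G_2(\mathbb{C}^{m+2})$ this sum is an \emph{exact identity}:
\[
2\tau(p) \;=\; (n+9)(n-1) + n^2\|H\|^2 - nC,
\]
with no residual geometric condition. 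The paper obtains this by choosing a frame adapted to the quaternionic structure ($e_{m+i}=J_1 e_i$, etc.) and computing each piece explicitly; the crucial point is a cancellation: one finds $\sum_i g(e_i,J_\alpha J e_i)=g(\xi_\alpha,\xi)$ while $\sum_j g(\xi_\alpha,Je_j)^2 = 1 - g(\xi_\alpha,\xi)^2$, so the $g(\xi_\alpha,\xi)^2$ contributions from the $J_\alpha J$ block and from the $J_\alpha$ block cancel pairwise, leaving the clean constant $9(n-1)$. Your ``worst case/bounded corrections'' picture misses this cancellation and is the wrong mental model here.

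Because of this, your account of the equality case is off. You say equality requires assembling \emph{two} sets of conditions, one from the ``Gauss-estimate equality'' (pinning down the ambient configuration) and one from the algebraic step. Since the first step is an identity, there is nothing to pin down there; the entire equality characterization comes solely from the critical-point analysis of the quadratic $\mathcal{P}$, which forces $h_{ij}=0$ for $i\ne j$ and $h_{nn}=\tfrac{n(n-1)}{r}h_{11}=\cdots=\tfrac{n(n-1)}{r}h_{n-1\,n-1}$, i.e.\ exactly the matrix (\ref{eq188}). (Also, ``invariantly quasi-umbilical with flat normal connection'' is not something extracted from an ambient configuration condition; in codimension one it is precisely the statement that $A_e$ has an eigenvalue of multiplicity $n-1$, which is what (\ref{eq188}) says.) A minor point: your $\mathcal{P}$ carries an extra factor of $2$ relative to the paper's, which is harmless but worth normalizing before you attempt the sum-of-squares/Hessian computation.
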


\begin{proof}
Since $M$ is a real hypersurface of $G_2(\mathbb{C}^{m+2})$ with a
unit normal vector field $e$, we may choose a local orthonormal
tangent frame $\{ e_1, \cdots, e_n \}$ of $TM$ and an orthonormal
normal frame $\{ e_{n+1}=e \}$ of $TM^{\perp}$ such that
\begin{align*}
&e_{m+i} = J_1 e_i,   \\
&e_{2m+i} = J_2 e_i,   \\
&e_{3m+i} = J_3 e_i ,  \\
&e_{4m-3} = \xi_1 = -J_1 e,  \\
&e_{4m-2} = \xi_2 = -J_2 e,  \\
&e_{4m-1} = e_n = \xi_3 = -J_3 e
\end{align*}
for $1\leq i\leq m-1$, where $\{ J_1,J_2,J_3 \}$ is a local
quaternionic Hermitian basis of $E$.

Let $\xi := -Je$.

Using (\ref{curv1}) and (\ref{eq5}), we get
\begin{align*}
2\tau(p) &= n(n-1) + 3\sum_{i,j=1}^n g(e_i,Je_j)^2   \\
&+ \sum_{\alpha=1}^3\sum_{i,j=1}^n \{ 3g(e_i,J_{\alpha}e_j)^2 +
g(e_i,J_{\alpha}Je_i)\cdot g(e_j,J_{\alpha}Je_j) -
g(e_i,J_{\alpha}Je_j)^2 \}     \\
&+ n^2||H||^2 - ||h||^2 \quad \text{for} \ p\in M.
\end{align*}
With some computations, we obtain
\begin{align*}
&3\sum_{i,j=1}^n g(e_i,Je_j)^2+ \sum_{\alpha=1}^3\sum_{i,j=1}^n \{
3g(e_i,J_{\alpha}e_j)^2 + g(e_i,J_{\alpha}Je_i)\cdot
g(e_j,J_{\alpha}Je_j) - g(e_i,J_{\alpha}Je_j)^2 \}   \\
&=3\sum_{i,j=1}^n g(e_i,Je_j)^2 + \sum_{\alpha=1}^3 \{
3(n-1)+(\sum_{i=1}^n g(e_i,J_{\alpha}Je_i))^2     \\
&-\sum_{i,j=1}^n g(e_i,Je_j)^2+\sum_{j=1}^n
g(\xi_{\alpha},Je_j)^2-\sum_{j=1}^n g(e,Je_j)^2 \}   \\
&=9(n-1) - 3\sum_{j=1}^n g(e,Je_j)^2 + \sum_{\alpha=1}^3 \{
(\sum_{i=1}^n g(e_i,J_{\alpha}Je_i))^2 + \sum_{j=1}^n
g(\xi_{\alpha},Je_j)^2 \}.
\end{align*}
Moreover,
\begin{align*}
\sum_{j=1}^n g(e,Je_j)^2
&= \sum_{j=1}^n g(\xi,e_j)^2 =  \sum_{j=1}^{n+1} g(\xi,e_j)^2 \\
&=||\xi||^2 = g(e,e) = 1,
\end{align*}
\begin{align*}
&\sum_{i=1}^n g(e_i,J_{\alpha}Je_i)   \\
&= -\sum_{i=1}^n g(J_{\alpha}e_i,Je_i) \\
&=-\sum_{i=1}^{m-1} \{ g(J_{\alpha}e_i,Je_i) +
g(J_{\alpha}J_1e_i,JJ_1e_i) + g(J_{\alpha}J_2e_i,JJ_2e_i) +
g(J_{\alpha}J_3e_i,JJ_3e_i) \}     \\
&-(g(J_{\alpha}\xi_1,J\xi_1) + g(J_{\alpha}\xi_2,J\xi_2) +
g(J_{\alpha}\xi_3,J\xi_3))   \\
&=0-(g(J_{\alpha}J_1e,JJ_1e) + g(J_{\alpha}J_2e,JJ_2e) +
g(J_{\alpha}J_3e,JJ_3e))  \ (\text{by} \ (\ref{comm})) \\
&=g(J_{\alpha}e,Je) = g(\xi_{\alpha},\xi)  \ (\text{by} \
(\ref{comm})),
\end{align*}
\begin{align*}
\sum_{j=1}^n g(\xi_{\alpha},Je_j)^2
&= \sum_{j=1}^n g(J\xi_{\alpha},e_j)^2   \\
&= \sum_{j=1}^{n+1} g(J\xi_{\alpha},e_j)^2 - g(J\xi_{\alpha},e)^2 \\
&= ||J\xi_{\alpha}||^2 - g(\xi_{\alpha},\xi)^2   \\
&= 1 - g(\xi_{\alpha},\xi)^2.
\end{align*}
Hence,
\begin{align*}
&9(n-1) - 3\sum_{j=1}^n g(e,Je_j)^2 + \sum_{\alpha=1}^3 \{
(\sum_{i=1}^n g(e_i,J_{\alpha}Je_i))^2 + \sum_{j=1}^n
g(\xi_{\alpha},Je_j)^2 \}   \\
&= 9(n-1).
\end{align*}
Therefore,
\begin{equation}\label{eq19}
2\tau(p) = (n+9)(n-1)+ n^2||H||^2 - nC.
\end{equation}
Conveniently, let $h_{ij} := h_{ij}^{n+1} = g(h(e_i,e_j),e_{n+1})$
for $i,j\in \{ 1,2,\cdots,n \}$.

Consider the quadratic polynomial in the components of the second
fundamental form
\begin{equation}\label{eq20}
\mathcal{P} := rC +
\frac{(n-1)(n+r)(n^2-n-r)}{rn}C(L)-2\tau(p)+(n+9)(n-1),
\end{equation}
where $L$ is a hyperplane of $T_p M$.

Now, we will deal with some linear algebraic properties of the
quadratic polynomial $\mathcal{P}$. Without loss of generality, we
may assume that $L$ is spanned by $e_1, \cdots,e_{n-1}$.

With a simple calculation, by (\ref{eq19}), we have
\begin{eqnarray}
\quad \mathcal{P}  &=& \frac{r}{n} \sum_{i,j=1}^n h_{ij}^2 + \frac{(n+r)(n^2-n-r)}{rn} \sum_{i,j=1}^{n-1} h_{ij}^2 -2\tau(p)+ (n+9)(n-1) \label{eq21} \\
&=& \frac{n+r}{n}\sum_{i,j=1}^n h_{ij}^2 + \frac{(n+r)(n^2-n-r)}{rn} \sum_{i,j=1}^{n-1} h_{ij}^2- (\sum_{i=1}^n h_{ii})^2  \nonumber  \\
&=& \sum_{i=1}^{n-1}[\frac{n^2+n(r-1)-2r}{r}h_{ii}^2 + \frac{(n+r)}{n} (h_{in}^2+h_{ni}^2)]  \nonumber  \\
&+&  \frac{(n+r)(n-1)}{r} \sum_{1\leq i\neq j\leq n-1} h_{ij}^2 -
\sum_{1\leq i\neq j\leq n} h_{ii}h_{jj} + \frac{r}{n} h_{nn}^2.
\nonumber
\end{eqnarray}
From (\ref{eq21}), the critical points $h^c =
(h_{11},h_{12},\cdots,h_{nn})$ of $\mathcal{P}$ are the solutions of
the system of linear homogeneous equations:
\begin{equation}\label{eq22}
\left\{
  \begin{array}{ll}
    \frac{\partial \mathcal{P}}{\partial h_{ii}} &= \frac{2(n+r)(n-1)}{r}h_{ii} -2\sum_{k=1}^n h_{kk} =0 \\
    \frac{\partial \mathcal{P}}{\partial h_{nn}} &= \frac{2r}{n} h_{nn} -2\sum_{k=1}^{n-1} h_{kk} =0 \\
    \frac{\partial \mathcal{P}}{\partial h_{ij}} &= \frac{2(n+r)(n-1)}{r}h_{ij} =0 \\
    \frac{\partial \mathcal{P}}{\partial h_{in}} &= \frac{2(n+r)}{n}h_{in} =0   \\
    \frac{\partial \mathcal{P}}{\partial h_{ni}} &= \frac{2(n+r)}{n}h_{ni} =0
  \end{array}
\right.
\end{equation}
for $i,j\in \{ 1,2,\cdots,n-1 \}$ with $i\neq j$.

From (\ref{eq22}), any solutions $h^c$ satisty $h_{ij}=0$ for
$i,j\in \{ 1,2,\cdots,n \}$ with $i\neq j$.

Moreover, we get the Hessian matrix $\mathcal{H}(\mathcal{P})$ of
$\mathcal{P}$ as follows:
$$
\mathcal{H}(\mathcal{P}) =
\left(
  \begin{array}{ccc}
    H_1 & 0 & 0 \\
    0 & H_2 & 0 \\
    0 & 0 & H_3 \\
  \end{array}
\right),
$$
where
$$
H_1 =
\left(
  \begin{array}{ccccc}
    \frac{2(n+r)(n-1)}{r}-2 & -2 & \cdots & -2 & -2 \\
    -2 & \frac{2(n+r)(n-1)}{r}-2 & \cdots & -2 & -2 \\
    \vdots & \vdots & \ddots & \vdots & \vdots \\
    -2 & -2 & \cdots & \frac{2(n+r)(n-1)}{r}-2 & -2 \\
    -2 & -2 & \cdots & -2 & \frac{2r}{n} \\
  \end{array}
\right),
$$
$0$ denotes the zero matrices with the corresponding sizes, and the
diagonal matrices $H_2$, $H_3$ are given by
$$
H_2 =
\text{diag}(\frac{2(n+r)(n-1)}{r},\frac{2(n+r)(n-1)}{r},\cdots,\frac{2(n+r)(n-1)}{r}),
$$
$$
H_3 =
\text{diag}(\frac{2(n+r)}{n},\frac{2(n+r)}{n},\cdots,\frac{2(n+r)}{n}).
$$
Then we can find that the Hessian matrix $\mathcal{H}(\mathcal{P})$
has the following eigenvalues
\begin{align*}
&\lambda_{11}=0, \lambda_{22}= \frac{2(n^3-n^2+r^2)}{rn}, \lambda_{33}= \cdots = \lambda_{nn}= \frac{2(n+r)(n-1)}{r},  \\
&\lambda_{ij}= \frac{2(n+r)(n-1)}{r}, \lambda_{in}= \lambda_{ni}=
\frac{2(n+r)}{n}
\end{align*}
for $i,j\in \{ 1,2,\cdots,n-1 \}$ with $i\neq j$.

Hence, we know that $\mathcal{P}$ is parabolic and has a minimum
$\mathcal{P}(h^c)$ at any solution $h^c$ of the system (\ref{eq22}).
Applying (\ref{eq22}) to (\ref{eq21}), we obtain
$\mathcal{P}(h^c)=0$. So, $\mathcal{P}\geq 0$ and this implies
$$
2\tau(p) \leq rC+\frac{(n-1)(n+r)(n^2-n-r)}{rn} C(L)+(n+9)(n-1).
$$
Therefore, we get
\begin{equation}\label{eq23}
\rho \leq
\frac{r}{n(n-1)}C+\frac{(n+r)(n^2-n-r)}{rn^2}C(L)+\frac{n+9}{n}
\end{equation}
for any hyperplane $L$ of $T_p M$ so that both inequalities
(\ref{eq17}) and (\ref{eq18}) easily follow from (\ref{eq23}).

Furthermore, we see that the equalities hold at the relations
(\ref{eq17}) and (\ref{eq18}) if and only if
\begin{align*}
&h_{ij}=0 \quad \text{for} \ i,j\in \{ 1,2,\cdots,n \} \ \text{with} \ i\neq j,  \\
&h_{nn}=\frac{n(n-1)}{r} h_{11} = \frac{n(n-1)}{r} h_{22} = \cdots =
\frac{n(n-1)}{r} h_{n-1n-1}.
\end{align*}
Therefore, we get that the equalities hold at (\ref{eq17}) and
(\ref{eq18}) if and only if the submanifold $M$ is invariantly
quasi-umbilical with flat normal connection in
$G_2(\mathbb{C}^{m+2})$ such that the shape operator takes the form
(\ref{eq188}) with respect to some orthonormal tangent and normal
frames.
\end{proof}

In the same way, by using (\ref{comm2}) and (\ref{curv2}), we obtain

\begin{theorem}
Let $M$ be a real hypersurface of a complex hyperbolic two-plane
Grassmannian $SU_{2,m}/S(U_2\cdot U_m)$ with $n=4m-1$. Then we have

(a) The generalized normalized $\delta$-Casorati curvature $\delta_c
(r,n-1)$ satisfies
\begin{equation}\label{eq24}
\rho \leq \frac{\delta_c (r,n-1)}{n(n-1)} - \frac{n+9}{2n}
\end{equation}
for any $r\in \mathbb{R}$ with $0<r<n(n-1)$.

(b) The generalized normalized $\delta$-Casorati curvature
$\widehat{\delta}_c (r,n-1)$ satisfies
\begin{equation}\label{eq25}
\rho \leq \frac{\widehat{\delta}_c (r,n-1)}{n(n-1)} - \frac{n+9}{2n}
\end{equation}
for any $r\in \mathbb{R}$ with $r>n(n-1)$.

Moreover, the equalities hold in the relations (\ref{eq24}) and
(\ref{eq25}) if and only if $M$ is an invariantly quasi-umbilical
submanifold with flat normal connection in $SU_{2,m}/S(U_2\cdot
U_m)$ such that with some orthonormal tangent frame $\{ e_1, \cdots,
e_n \}$ of $TM$ and orthonormal normal frame $\{ e_{n+1}=e \}$ of
$TM^{\perp}$, the shape operator $A_e$ takes the following form
$$
A_e = \left(
        \begin{array}{ccccc}
          a & 0 & \cdots & 0 & 0 \\
          0 & a & \cdots & 0 & 0 \\
          \vdots & \vdots & \ddots & \vdots & \vdots \\
          0 & 0 & \cdots & a & 0 \\
          0 & 0 & \cdots & 0 & \frac{n(n-1)}{r}a \\
        \end{array}
      \right).
$$
\end{theorem}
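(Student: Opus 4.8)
The plan is to mirror the proof of the previous theorem (for $G_2(\mathbb{C}^{m+2})$) almost verbatim, since the only structural difference is the overall factor $-\tfrac12$ in the curvature tensor formula (\ref{curv2}) versus (\ref{curv1}), together with the identical commutation relation (\ref{comm2}). First I would fix the same adapted local orthonormal frame $\{e_1,\dots,e_n\}$ of $TM$ and $\{e_{n+1}=e\}$ of $TM^{\perp}$, with $e_{m+i}=J_1e_i$, $e_{2m+i}=J_2e_i$, $e_{3m+i}=J_3e_i$ for $1\le i\le m-1$ and $\xi_\alpha=-J_\alpha e$ filling out the last three slots, and set $\xi=-Je$. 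Then I would substitute (\ref{curv2}) into the Gauss equation (\ref{eq5}) and sum over $1\le i<j\le n$ exactly as before; the combinatorial identities used there — namely $\sum_j g(e,Je_j)^2=1$, $\sum_i g(e_i,J_\alpha Je_i)=g(\xi_\alpha,\xi)$, and $\sum_j g(\xi_\alpha,Je_j)^2=1-g(\xi_\alpha,\xi)^2$, all consequences of (\ref{comm2}) and the frame choice — are unchanged, so the bracketed curvature sum again collapses to $9(n-1)$. The net effect is that the analogue of (\ref{eq19}) becomes
\begin{equation}\label{eq26}
2\tau(p) = -\tfrac{1}{2}\bigl[(n+9)(n-1)\bigr] + n^2\|H\|^2 - nC,
\end{equation}
i.e. the constant term $(n+9)(n-1)$ is replaced by $-\tfrac12(n+9)(n-1)$, while the $n^2\|H\|^2-nC$ part (which comes purely from the Gauss equation, not from $\overline R$) is identical.

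Next I would introduce the same auxiliary quadratic polynomial, now adapted to the new constant:
\begin{equation}\label{eq27}
\mathcal{P} := rC + \frac{(n-1)(n+r)(n^2-n-r)}{rn}\,C(L) - 2\tau(p) - \tfrac12(n+9)(n-1),
\end{equation}
where $L$ is the hyperplane spanned by $e_1,\dots,e_{n-1}$. Substituting (\ref{eq26}) into (\ref{eq27}), the extrinsic term $-\tfrac12(n+9)(n-1)$ cancels exactly as $(n+9)(n-1)$ did in the compact case, leaving \emph{precisely} the same expression for $\mathcal{P}$ in terms of the $h_{ij}$ as in (\ref{eq21}). From this point the argument is purely linear-algebraic and word-for-word the same: the critical-point system (\ref{eq22}) forces $h_{ij}=0$ for $i\neq j$, the Hessian $\mathcal{H}(\mathcal{P})$ has the same block structure with the same nonnegative eigenvalues $\lambda_{11}=0$, $\lambda_{22}=\tfrac{2(n^3-n^2+r^2)}{rn}$, and the remaining ones equal to $\tfrac{2(n+r)(n-1)}{r}$ or $\tfrac{2(n+r)}{n}$, all positive for $0<r$; hence $\mathcal{P}$ is parabolic with minimum value $\mathcal{P}(h^c)=0$. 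Therefore $\mathcal{P}\ge 0$, which yields
\begin{equation}\label{eq28}
\rho \leq \frac{r}{n(n-1)}C + \frac{(n+r)(n^2-n-r)}{rn^2}\,C(L) + \frac{1}{n(n-1)}\cdot\Bigl(-\tfrac12(n+9)(n-1)\Bigr)
\end{equation}
for every hyperplane $L$ of $T_pM$; taking the infimum over $L$ and recalling the definition (\ref{eq15}) of $\delta_c(r,n-1)$ gives (\ref{eq24}) when $0<r<n(n-1)$, and taking the supremum over $L$ together with (\ref{eq16}) gives (\ref{eq25}) when $r>n(n-1)$. (One checks the sign of the coefficient $\tfrac{(n+r)(n^2-n-r)}{rn^2}$ changes exactly at $r=n^2-n=n(n-1)$, which is what makes the inf-case and sup-case split there, just as in Theorem 3.1.)

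Finally, the equality discussion is identical: equality in (\ref{eq24}) or (\ref{eq25}) holds iff $\mathcal{P}(h)=0$ iff $h_{ij}=0$ for $i\neq j$ and $h_{nn}=\tfrac{n(n-1)}{r}h_{11}=\cdots=\tfrac{n(n-1)}{r}h_{n-1\,n-1}$, which says exactly that $M$ is invariantly quasi-umbilical with flat normal connection and $A_e$ has the displayed diagonal form with $a=h_{11}$. Honestly I expect no real obstacle here — the content is entirely in the bookkeeping — but the one place to be careful is the constant-term computation: I must verify that the three frame-dependent sums genuinely reduce to $9(n-1)$ under (\ref{comm2}) (they do, since (\ref{comm2}) is formally the same as (\ref{comm})), so that after multiplying by $-\tfrac12$ the extrinsic constant in $\mathcal{P}$ is exactly $-\tfrac12(n+9)(n-1)$ and cancels cleanly, preserving the quadratic form (\ref{eq21}) verbatim. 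Since it does, the optimality (sharpness of the inequality and the eigenvalue analysis of $\mathcal{H}(\mathcal{P})$) transfers without change, and the proof concludes.
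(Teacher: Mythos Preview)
Your proposal is correct and follows exactly the approach the paper takes: the paper simply states ``In the same way, by using (\ref{comm2}) and (\ref{curv2}), we obtain'' the theorem, and your write-up spells out precisely that transfer, correctly tracking the single change (the factor $-\tfrac12$ in the ambient curvature tensor) through to the constant $-\tfrac12(n+9)(n-1)$ while leaving the quadratic form (\ref{eq21}), the critical-point analysis, the Hessian eigenvalues, and the equality characterization untouched.
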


Using the relations $[\delta_c (\frac{n(n-1)}{2},n-1)](p) =
n(n-1)[\delta_c (n-1)](p)$ and $[\widehat{\delta}_c
(2n(n-1),n-1)](p) = n(n-1)[\widehat{\delta}_c (n-1)](p)$ for $p\in
M$, we easily have

\begin{corollary}
Let $M$ be a real hypersurface of a complex two-plane Grassmannians
$G_2(\mathbb{C}^{m+2})$ with $n=4m-1$. Then we get

(a) The normalized $\delta$-Casorati curvature $\delta_c (n-1)$
satisfies
\begin{equation}\label{eq26}
\rho \leq \delta_c (n-1) + \frac{n+9}{n}.
\end{equation}
Moreover, the equality holds if and only if $M$ is an invariantly
quasi-umbilical submanifold with flat normal connection in
$G_2(\mathbb{C}^{m+2})$ such that with some orthonormal tangent
frame $\{ e_1, \cdots, e_n \}$ of $TM$ and orthonormal normal frame
$\{ e_{n+1}=e \}$ of $TM^{\perp}$, the shape operator $A_e$ takes
the following form
$$
A_e = \left(
        \begin{array}{ccccc}
          a & 0 & \cdots & 0 & 0 \\
          0 & a & \cdots & 0 & 0 \\
          \vdots & \vdots & \ddots & \vdots & \vdots \\
          0 & 0 & \cdots & a & 0 \\
          0 & 0 & \cdots & 0 & 2a \\
        \end{array}
      \right).
$$
(b) The normalized $\delta$-Casorati curvature $\widehat{\delta}_c
(n-1)$ satisfies
\begin{equation}\label{eq27}
\rho \leq \widehat{\delta}_c (n-1) + \frac{n+9}{n}.
\end{equation}
Moreover, the equality holds if and only if $M$ is an invariantly
quasi-umbilical submanifold with flat normal connection in
$G_2(\mathbb{C}^{m+2})$ such that with some orthonormal tangent
frame $\{ e_1, \cdots, e_n \}$ of $TM$ and orthonormal normal frame
$\{ e_{n+1}=e \}$ of $TM^{\perp}$, the shape operator $A_e$ takes
the following form
$$
A_e = \left(
        \begin{array}{ccccc}
          2a & 0 & \cdots & 0 & 0 \\
          0 & 2a & \cdots & 0 & 0 \\
          \vdots & \vdots & \ddots & \vdots & \vdots \\
          0 & 0 & \cdots & 2a & 0 \\
          0 & 0 & \cdots & 0 & a \\
        \end{array}
      \right).
$$
\end{corollary}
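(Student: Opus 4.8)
The plan is to obtain this Corollary as a direct specialization of Theorem~3.1 (the $G_2(\mathbb{C}^{m+2})$ case), with no additional geometric input. The engine of the argument is the pair of identities recorded at the end of the Preliminaries, $[\delta_c(\tfrac{n(n-1)}{2},n-1)](p)=n(n-1)[\delta_c(n-1)](p)$ and $[\widehat{\delta}_c(2n(n-1),n-1)](p)=n(n-1)[\widehat{\delta}_c(n-1)](p)$, which let me reduce the generalized $\delta$-Casorati curvatures to the classical normalized ones at two distinguished values of $r$. So the proof is essentially a substitution into the already-established inequalities \eqref{eq17} and \eqref{eq18}.

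For part~(a), I would first note that $r=\tfrac{n(n-1)}{2}$ lies in the admissible range $0<r<n(n-1)$ (using $n=4m-1>0$), so that inequality~\eqref{eq17} of Theorem~3.1 is valid at this $r$. Substituting $\delta_c(\tfrac{n(n-1)}{2},n-1)=n(n-1)\delta_c(n-1)$ into \eqref{eq17} makes the prefactor $\tfrac{1}{n(n-1)}$ cancel the $n(n-1)$, which leaves precisely \eqref{eq26}. For part~(b), the value $r=2n(n-1)$ satisfies $r>n(n-1)$, so \eqref{eq18} applies; substituting $\widehat{\delta}_c(2n(n-1),n-1)=n(n-1)\widehat{\delta}_c(n-1)$ produces \eqref{eq27} by the same cancellation.

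For the equality statements I would invoke the equality clause of Theorem~3.1: equality in \eqref{eq17} or in \eqref{eq18} holds exactly when $M$ is invariantly quasi-umbilical with flat normal connection and $A_e$ has the form \eqref{eq188}, whose final diagonal entry is $\tfrac{n(n-1)}{r}a$. Putting $r=\tfrac{n(n-1)}{2}$ gives $\tfrac{n(n-1)}{r}=2$, so $A_e=\mathrm{diag}(a,\dots,a,2a)$, which is exactly the matrix displayed in part~(a). Putting $r=2n(n-1)$ gives $\tfrac{n(n-1)}{r}=\tfrac12$, so $A_e=\mathrm{diag}(a,\dots,a,\tfrac12 a)$; since the scalar $a$ in \eqref{eq188} is a free parameter, rescaling $a\mapsto 2a$ rewrites this as $\mathrm{diag}(2a,\dots,2a,a)$, the matrix displayed in part~(b).

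There is no genuine analytic obstacle here---the Corollary is a pure substitution into Theorem~3.1---so the only thing I would handle with care is the bookkeeping in part~(b). The eigenvalue pattern coming straight out of \eqref{eq188} places the smaller eigenvalue in the last slot as $\tfrac12 a$, whereas the Corollary normalizes the smaller eigenvalue to $a$ and the repeated one to $2a$; I would flag the relabeling $a\mapsto 2a$ explicitly so that the two matrix forms are visibly the same family and no apparent discrepancy is left to the reader. Everything else follows verbatim from the two inequalities and the equality clause of Theorem~3.1.
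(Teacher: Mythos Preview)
Your proposal is correct and follows exactly the paper's approach: the paper also derives the Corollary by substituting $r=\tfrac{n(n-1)}{2}$ and $r=2n(n-1)$ into Theorem~3.1 via the identities $[\delta_c(\tfrac{n(n-1)}{2},n-1)](p)=n(n-1)[\delta_c(n-1)](p)$ and $[\widehat{\delta}_c(2n(n-1),n-1)](p)=n(n-1)[\widehat{\delta}_c(n-1)](p)$. Your explicit remark on the relabeling $a\mapsto 2a$ in part~(b) is a clarification the paper leaves implicit.
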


\begin{corollary}
Let $M$ be a real hypersurface of a complex hyperbolic two-plane
Grassmannian $SU_{2,m}/S(U_2\cdot U_m)$ with $n=4m-1$. Then we
obtain

(a) The normalized $\delta$-Casorati curvature $\delta_c (n-1)$
satisfies
\begin{equation}\label{eq28}
\rho \leq \delta_c (n-1) - \frac{n+9}{2n}.
\end{equation}
Moreover, the equality holds if and only if $M$ is an invariantly
quasi-umbilical submanifold with flat normal connection in
$SU_{2,m}/S(U_2\cdot U_m)$ such that with some orthonormal tangent
frame $\{ e_1, \cdots, e_n \}$ of $TM$ and orthonormal normal frame
$\{ e_{n+1}=e \}$ of $TM^{\perp}$, the shape operator $A_e$ takes
the following form
$$
A_e = \left(
        \begin{array}{ccccc}
          a & 0 & \cdots & 0 & 0 \\
          0 & a & \cdots & 0 & 0 \\
          \vdots & \vdots & \ddots & \vdots & \vdots \\
          0 & 0 & \cdots & a & 0 \\
          0 & 0 & \cdots & 0 & 2a \\
        \end{array}
      \right).
$$
(b) The normalized $\delta$-Casorati curvature $\widehat{\delta}_c
(n-1)$ satisfies
\begin{equation}\label{eq29}
\rho \leq \widehat{\delta}_c (n-1) - \frac{n+9}{2n}.
\end{equation}
Moreover, the equality holds if and only if $M$ is an invariantly
quasi-umbilical submanifold with flat normal connection in
$SU_{2,m}/S(U_2\cdot U_m)$ such that with some orthonormal tangent
frame $\{ e_1, \cdots, e_n \}$ of $TM$ and orthonormal normal frame
$\{ e_{n+1}=e \}$ of $TM^{\perp}$, the shape operator $A_e$ takes
the following form
$$
A_e = \left(
        \begin{array}{ccccc}
          2a & 0 & \cdots & 0 & 0 \\
          0 & 2a & \cdots & 0 & 0 \\
          \vdots & \vdots & \ddots & \vdots & \vdots \\
          0 & 0 & \cdots & 2a & 0 \\
          0 & 0 & \cdots & 0 & a \\
        \end{array}
      \right).
$$
\end{corollary}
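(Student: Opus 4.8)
The plan is to obtain both statements as immediate specializations of the preceding theorem for $SU_{2,m}/S(U_2\cdot U_m)$, in the same way the corollary for $G_2(\mathbb{C}^{m+2})$ is deduced from the corresponding theorem. The two ingredients are the identities recorded in Section~\ref{prelim} (with $m$ there replaced by $n$ here), namely $[\delta_c(\tfrac{n(n-1)}{2},n-1)](p)=n(n-1)[\delta_c(n-1)](p)$ and $[\widehat{\delta}_c(2n(n-1),n-1)](p)=n(n-1)[\widehat{\delta}_c(n-1)](p)$ for $p\in M$, which say that $\delta_c(n-1)$ and $\widehat{\delta}_c(n-1)$ are recovered from the generalized curvatures at the parameter values $r=\tfrac{n(n-1)}{2}$ and $r=2n(n-1)$, respectively.

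For part (a), I would set $r=\tfrac{n(n-1)}{2}$ in \eqref{eq24}; this value lies in the admissible interval $0<r<n(n-1)$. The left-hand side is unchanged, and by the first identity the right-hand side becomes $\tfrac{1}{n(n-1)}[\delta_c(\tfrac{n(n-1)}{2},n-1)](p)-\tfrac{n+9}{2n}=\delta_c(n-1)-\tfrac{n+9}{2n}$, which is \eqref{eq28}. For part (b), I would set $r=2n(n-1)$ in \eqref{eq25}; this value lies in the admissible interval $r>n(n-1)$, and by the second identity the right-hand side becomes $\widehat{\delta}_c(n-1)-\tfrac{n+9}{2n}$, which is \eqref{eq29}.

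The equality discussion is inherited. From the preceding theorem, equality in \eqref{eq24} (resp.\ \eqref{eq25}) holds if and only if $M$ is invariantly quasi-umbilical with flat normal connection in $SU_{2,m}/S(U_2\cdot U_m)$ and, in suitable orthonormal tangent and normal frames, $A_e=\mathrm{diag}(a,\dots,a,\tfrac{n(n-1)}{r}a)$. Substituting $r=\tfrac{n(n-1)}{2}$ gives $\tfrac{n(n-1)}{r}=2$, hence $A_e=\mathrm{diag}(a,\dots,a,2a)$, the matrix in part (a); substituting $r=2n(n-1)$ gives $\tfrac{n(n-1)}{r}=\tfrac12$, hence $A_e=\mathrm{diag}(a,\dots,a,\tfrac{a}{2})$, and renaming the free eigenvalue parameter via $a=2b$ puts this in the form $\mathrm{diag}(2b,\dots,2b,b)$ displayed in part (b). Since every step is a substitution into an inequality and an equality characterization already established, there is no genuine obstacle; the only points requiring a moment's care are checking that the two chosen values of $r$ fall in the correct ranges and that the part (b) matrix coincides with the asserted one after the harmless rescaling of the eigenvalue parameter.
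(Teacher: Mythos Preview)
Your proposal is correct and follows exactly the approach the paper takes: specialize the preceding theorem for $SU_{2,m}/S(U_2\cdot U_m)$ at $r=\tfrac{n(n-1)}{2}$ and $r=2n(n-1)$ using the identities $[\delta_c(\tfrac{n(n-1)}{2},n-1)]=n(n-1)\delta_c(n-1)$ and $[\widehat{\delta}_c(2n(n-1),n-1)]=n(n-1)\widehat{\delta}_c(n-1)$. Your extra care in verifying the admissible ranges for $r$ and in rescaling the eigenvalue parameter for part~(b) simply makes explicit what the paper leaves implicit.
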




\end{document}